\gdef\n@te#1#2{\leavevmode\vadjust{%
 {\setbox\z@\hbox to\z@{\strut#1}%
  \setbox\z@\hbox{\raise\dp\strutbox\box\z@}\ht\z@=\z@\dp\z@=\z@%
  #2\box\z@}}}
\gdef\leftnote#1{\n@te{\hss#1\quad}{}}
\gdef\rightnote#1{\n@te{\quad\kern-\leftskip#1\hss}{\moveright\hsize}}
\gdef\?{\FN@\qumark}
\gdef\qumark{\ifx\next"\DN@"##1"{\leftnote{\rm##1}}\else
 \DN@{\leftnote{\rm??}}\fi{\rm??}\next@}}
\DeclareFontFamily{OT1}{wncyr}{\hyphenchar\font45 }
\DeclareFontShape{OT1}{wncyr}{m}{n}{%
   <5> <6> <7> <8> <9> gen * wncyr
   <10> <10.95> <12> <14.4> <17.28> <20.74>  <24.88>wncyr10}{}
\DeclareFontShape{OT1}{wncyr}{m}{it}{%
   <5> <6> <7> <8> <9> gen * wncyi
   <10> <10.95> <12> <14.4> <17.28> <20.74> <24.88> wncyi10}{}
\DeclareFontShape{OT1}{wncyr}{m}{sc}{%
   <5> <6> <7> <8> <9> <10> <10.95> <12> <14.4>
   <17.28> <20.74> <24.88>wncysc10}{}
\DeclareFontShape{OT1}{wncyr}{b}{n}{%
   <5> <6> <7> <8> <9> gen * wncyb
   <10> <10.95> <12> <14.4> <17.28> <20.74> <24.88>wncyb10}{}
\def\rus{\usefont{OT1}{wncyr}{m}{n}\cyracc\fontsize{9}{11pt}\selectfont}
\theoremstyle{plain}
\newtheorem{theorem}{Theorem}
\newtheorem{proposition}{Proposition}
\newtheorem*{cornonumber}{Corollary}
\theoremstyle{definition}
\newtheorem*{definonumber}{Definition}
\newtheorem{nothing*}[theorem]{}
\newtheorem{subnothing*}[sub]{}
\newtheorem{example}{Example}
\theoremstyle{remark}
\def\bA1{{\mathbf A}\!^1}
\def\P1{{\bf P}^1}
\newcommand{\dss}{\hskip -2mm\rotatebox{68}{\raisebox{-1.8\height}{\mbox{\normalsize -\hskip .1mm-\hskip .1mm-}}}\hskip -.6mm}
\newcommand{\Psb}{{\bf P}^s}
\newcommand{\Pb}{{\bf P}^1}
\begin{document}

\title[Birational splitting]{Birational\;splitting\;and\\ algebraic\;group\;actions}

\author[Vladimir  L. Popov]{Vladimir  L. Popov${}^*$}
\address{Steklov Mathematical Institute,
Russian Academy of Sciences, Gubkina 8, Moscow\\
119991, Russia}
 \email{popovvl@mi.ras.ru}

\address{National Research University\\ Higher School of Economics\\ Myasnitskaya
20\\ Moscow 101000,\;Russia}

\thanks{
 ${}^*$\,Supported by
 grants {\rus RFFI
15-01-02158}, {\rus N{SH}--2998.2014.1}.
}

\begin{abstract}
According to the classical theorem,
every irreducible algeb\-raic variety
endowed with a nontrivial rational action of a connected linear algebraic group is birationally isomorphic
to a product of another algebraic variety and ${\bf P}^s$ with positive $s$.\;We
show that the classical proof of this theorem actually works only in characteristic $0$ and we give a characteristic free proof of it.\;To this end we prove and use a characterization of connected linear algebraic groups $G$ with the property that every rational action of $G$ on an irreducible algebraic variety is birationally equivalent to a regular action of $G$ on an affine algebraic variety.
\end{abstract}

\maketitle

1. Throughout this note $k$ stands for an algebraically closed field of arbitrary characteristic which serves
as domain of definition for each of the algebraic varieties considered below.\;Each algebraic variety is
identified with its set of $k$-rational points.\;We use freely the standard notation and conventions of
\cite{PV94}, \cite{Sp98} and  refer to \cite{Ro56}, \cite{Ro61}, \cite{Ro63},
\cite{PV94}, \cite{Po13} regarding the definitions and basic properties of rational and regular (morphic) actions of algebraic groups on algebraic varieties.\;Given a rational action of such a group $G$ on an irreducible
algebraic variety $X$, we denote by $\pi^{\ }_{G, X}\colon X\dashrightarrow X\dss G$ a rational quotient of this action; the latter means
 that $X\dss G$ and $\pi_{G, X}$ are respectively an irreducible variety and a dominant rational map such that $\pi_{G, X}^*(k(X\dss G))=k(X)^G$.

\vskip 1mm

2. Up to a change of notation and terminology, the following statement
appeared
in classical paper \cite[Thm.\;1]{Ma63}:

\begin{theorem}
\label{main}
Assume that a connected linear algebraic group $G$
acts rational\-ly and nontrivially on an irreducible algebraic variety $X$, and let $B$ be a Borel subgroup of $G$.\;Then $X$ is birationally isomorphic to
$\Psb\times X\dss B$, where $\pi^{\ }_{B, X}\colon X\dashrightarrow X\dss B$ is a rational quotient of the natural rational action of $B$ on $X$ and $0<s\leqslant \dim B$.
\end{theorem}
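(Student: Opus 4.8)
The plan is to reduce to the case $G=B$, i.e. to a connected \emph{solvable} group, and then to prove, by induction on $\dim B$, the sharper statement: \emph{if a connected solvable linear algebraic group $B$ acts rationally and nontrivially on an irreducible variety $X$, then $X\cong_{\mathrm{bir}}\mathbf{P}^{s}\times(X\dss B)$ with $0<s\leqslant\dim B$.} To pass from this to Theorem~\ref{main} one notes that a Borel subgroup $B$ of $G$ must act nontrivially on $X$: the set $N$ of elements of $G$ acting as the identity birational self-map of $X$ is a normal subgroup of $G$, and if $B\subseteq N$ then every conjugate $gBg^{-1}$, and in particular an opposite Borel $B^{-}$, also lies in $N$; but $\langle B,B^{-}\rangle=G$ (both contain $R_{u}(G)$, and their images in the reductive group $G/R_{u}(G)$ generate it), so $N=G$, contradicting nontriviality.

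For the base case $\dim B=1$ we have $B\cong\mathbb{G}_{a}$ or $B\cong\mathbb{G}_{m}$. A nontrivial rational action then has one-dimensional generic orbits (a zero-dimensional generic orbit would mean all orbits are points, hence the action trivial), so by Rosenlicht's theorem \cite{Ro56} the generic fibre of $\pi_{B,X}$ is a single $B$-orbit $O$ over the field $k(X\dss B)$. As $B$ is commutative, $O$ is a torsor over $B/B_{x}$, where $B_{x}$ is the generic stabilizer; and $B/B_{x}$ is again isomorphic to $\mathbb{G}_{a}$ (resp. $\mathbb{G}_{m}$), since in every characteristic $\mathbb{G}_{a}$ modulo a finite or infinitesimal subgroup is $\mathbb{G}_{a}$, and $\mathbb{G}_{m}/\mu_{n}\cong\mathbb{G}_{m}$. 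Over the field $k(X\dss B)$ this torsor is trivial — by Hilbert's Theorem~90 when $B\cong\mathbb{G}_{m}$, and by the vanishing of $H^{1}$ of $\mathbb{G}_{a}$ over a field when $B\cong\mathbb{G}_{a}$ — whence $k(X)\cong k(X\dss B)(t)$, i.e. $X\cong_{\mathrm{bir}}\mathbf{P}^{1}\times(X\dss B)$, and $s=1=\dim B$.

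For the inductive step, use the structure theory of connected solvable groups (see e.g. \cite{Sp98}) to choose a closed, connected, normal subgroup $B_{1}\subset B$ of codimension one, so that $\Gamma:=B/B_{1}\cong\mathbb{G}_{a}$ or $\mathbb{G}_{m}$; since $B_{1}$ is normal, $k(X)^{B_{1}}$ is $\Gamma$-stable with $(k(X)^{B_{1}})^{\Gamma}=k(X)^{B}$, so $\Gamma$ acts rationally on $X\dss B_{1}$ with rational quotient $X\dss B$. If $B_{1}$ acts trivially on $X$, then $\Gamma$ acts nontrivially on $X$ with quotient $X\dss B$, and the base case gives the claim with $s=1$. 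If $B_{1}$ acts nontrivially, the inductive hypothesis applied to $B_{1}$ gives $X\cong_{\mathrm{bir}}\mathbf{P}^{r}\times(X\dss B_{1})$ with $0<r\leqslant\dim B_{1}$. Now if $\Gamma$ acts trivially on $X\dss B_{1}$ then $X\dss B_{1}\cong_{\mathrm{bir}}X\dss B$ and we are done with $s=r$; otherwise the base case, applied to the action of $\Gamma$ on $X\dss B_{1}$, gives $X\dss B_{1}\cong_{\mathrm{bir}}\mathbf{P}^{1}\times(X\dss B)$, hence $X\cong_{\mathrm{bir}}\mathbf{P}^{r}\times\mathbf{P}^{1}\times(X\dss B)\cong_{\mathrm{bir}}\mathbf{P}^{r+1}\times(X\dss B)$ (as $\mathbf{P}^{r}\times\mathbf{P}^{1}$ is rational of dimension $r+1$), so $s=r+1\leqslant\dim B_{1}+1=\dim B$. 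In every case $0<s\leqslant\dim B$, which finishes the induction and, with the first paragraph, the proof of Theorem~\ref{main}.

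The step I expect to be the main obstacle is the base case in positive characteristic — precisely the point where, as noted in the introduction, the classical argument tacitly uses characteristic $0$. What matters is not that a one-dimensional orbit ``is an affine line'', but that the resulting $\mathbf{P}^{1}$-fibration $\pi_{B,X}$ splits \emph{rationally}: this is the triviality of the $\mathbb{G}_{a}$- or $\mathbb{G}_{m}$-torsor above (which holds over an arbitrary field), together with the fact that the quotient of $\mathbb{G}_{a}$ or $\mathbb{G}_{m}$ by a possibly non-reduced generic stabilizer is again $\mathbb{G}_{a}$ or $\mathbb{G}_{m}$. Everything else — the existence and basic properties of rational quotients (Rosenlicht), the reduction to a Borel subgroup, and the structure theory of connected solvable groups — is characteristic-free; alternatively, one may first regularize the $B$-action (the regularization theorem of the present paper) and carry out the same descent through the composition series for regular actions.
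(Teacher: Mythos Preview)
Your argument is correct and gives a genuine alternative to the paper's proof; the inductive skeleton (reduce to $B$, descend along a normal chain with one–dimensional quotients) is the same, but the heart of the matter—the one–dimensional base case in positive characteristic—is handled differently.

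The paper first invokes its Theorem~\ref{solvable} to replace $X$ by an \emph{affine} variety with a \emph{regular} $H$-action (this uses Sumihiro's theorem and the Borel fixed-point theorem), and then treats $H\cong k^{+}$ and $H\cong k^{\times}$ separately by structural results on the coordinate ring: Springer/Miyanishi's slice theorem (Proposition~\ref{Spring}) for $k^{+}$, and an explicit isotypic–decomposition computation producing a transcendental generator $t\in k(X)_n$ for $k^{\times}$. Your route instead works at the generic point: after regularization and passage to a geometric quotient (Rosenlicht), the generic fibre $X_\eta$ over $K=k(X\dss B)$ is a homogeneous space for the commutative group $B_K$; commutativity makes the scheme-theoretic stabiliser independent of the point, so $X_\eta$ is a $(B_K/S)$-torsor, and the key positive-characteristic input is that $\mathbb{G}_{a,K}/S\cong\mathbb{G}_{a,K}$ and $\mathbb{G}_{m,K}/S\cong\mathbb{G}_{m,K}$ for every finite $K$-subgroup scheme $S$ (realised by an additive polynomial, resp.\ by $x\mapsto x^n$). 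The torsor is then trivial by $H^1(K,\mathbb{G}_a)=H^1(K,\mathbb{G}_m)=0$, giving $k(X)\cong K(t)$.

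What each buys: your argument is more conceptual, avoids the detour through affineness, and does not need the auxiliary Theorem~\ref{solvable}; the paper's route is more constructive (one sees the splitting coordinate explicitly) and yields Theorem~\ref{solvable} as a result of independent interest. Two small remarks: once Rosenlicht's cross-section supplies a $K$-point of $X_\eta$, the $H^1$ step is already implied; and you should make explicit that the stabiliser $S$ is taken scheme-theoretically over $K$ (this is exactly what absorbs the inseparability in Examples~\ref{ex1}--\ref{ex3} and is the reason Matsumura's original map $\gamma$ fails while $B_K/S\to X_\eta$ does not).
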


In \cite{Ma63} no restriction on ${\rm char}\,k$ is imposed, but actually the
brief argument given there in support of
Theorem \ref{main}
works only if ${\rm char}\,k=0$.\;We reproduce it below
in order to pinpoint where the restriction
${\rm char}\,k=0$ is implicitly used.

\begin{proof}[Argument from \cite{Ma63} supporting Theorem {\rm\ref{main}}] Since $G$ is generated by its Borel subgroups and since all Borel subgroups are conjugate to each other, $B$ acts on $X$ nontrivially.\;Since $B$ is a connected solvable linear algebraic group, there is a chain of connected subgroups
\begin{equation*}
B=B_0\supset B_1\supset \cdots \supset B_n=\{e\}
\end{equation*}
such that all $B_i$ are normal in $B$ and $\dim B_i=\dim B-i$. If $d$ is the largest index $i$ such that the action of $B_i$ on $X$ is not trivial, then let
\begin{equation}\label{quo}
\pi^{\ } _{B_d, X}\colon X\dashrightarrow X\dss B_d=:X_d
 \end{equation}
 be a rational quotient of $X$ with respect to $B_d$.\;By the cross-section theorem (\cite{Ro56}) we find that $X$ is birationally equivalent to $\Pb\times X_d$.\;The
factor group $B/B_d$ acts on $X_d$
and we can repeat the same argument.
\end{proof}

3. The assumption ${\rm char}\,k=0$ is actually implicitly used in the penulti\-ma\-te phrase of this argument.\;Indeed, it
purports the following.\;Let
be a section of $\pi^{\ }_{B_d, X}$, i.e., a rational map such that $\pi^{\ }_{B_d, X}\circ\sigma={\rm id}$.
Since $B_{d+1}$ lies in the kernel of the action of $B_d$ on $X$, this action is reduced to that of
the one-dimensional connected linear algebraic group $B_d/B_{d+1}$.\;This
action is nontrivial, hence the $B_d/B_{d+1}$-stabilizers of
points of a dense open subset of $X$ are
finite;\;in
particular, the kernel $K$ of this action is finite.\;The action of $C_d:=(B_d/B_{d+1})/K$  on $X$ is faithful, and \eqref{quo}
is its rational quotient.

Being
 a  connected one-dimensional linear algebraic group,
 $C_d$ is iso\-mor\-phic to either $k^\times$ (the multiplicative group of $k$) or $k^+$ (the additive group of $k$;
see, e.g.,\;\cite[Thm.\;3.4.9]{Sp98}.

 If it is isomorphic to $k^\times$,
 then faithfulness of its action on $X$
 implies that this action is locally free (i.e., $C_d$-stabilizers of points
 of a dense open subset of $X$ are trivial); see \cite[Lemma 2.4]{Po13}.\;Therefore, the dominant rational map
$\gamma\colon C_d\times X_d\dashrightarrow X$,
 $(c, b)\mapsto c\cdot \sigma(b)$,
 is bijective
 over a dense open subset of $X$.\;If ${\rm char}\,k=0$, the latter implies that $\gamma$ is a birational isomorphism and hence $X$ is birationally isomorphic to $\Pb\times X_d$ because the group variety of
 $C_d$ is rational.\;But if ${\rm char}\,k>0$, one can only say that
 $\gamma$ is either a birational isomorphism or purely inseparable, but not a birational isomorphism.\;The following example shows that the latter indeed may occur.

 \begin{example}\label{ex1} Let ${\rm char}\,k=p>0$.\;Consider the locally free action of $G=B=k^\times$
 on $X=
 k\setminus \{0\}$, given by $b\cdot x:=b^px$.\;We have $n=1$, $d=0$, $C_0=G$, and $X_0$
 is a point.\;Therefore, $C_0\times X_0$ is naturally identified with $G$.\;Let $\sigma$ maps $X_0$ to $1$.\;Then $\gamma^*(k(X))=k(t^p)\varsubsetneq k(t)=k(G)$, where $t$ is the standard coordinate function on $G$.\;Thus
 $\gamma$ is not a birational isomorphism.
  \end{example}

 If $C_d$ is isomorphic to $k^+$,
 the same argument works if we know that
 the action of $C_d$ on $X$ is locally free.\;If ${\rm char}\,k=0$, then local freeness indeed holds because
 in this case there are no nontrivial finite subgroups in $k^+$.\;However,
 if ${\rm char}\,k>0$,
 it may happen that the action of $C_d$ on $X$ is not locally free; therefore,
 $\gamma$ is not bijective over a dense open subset of $X$, a fortiori is not a birational isomorphism.\;The example below 
 is based on the idea going back to \cite[7.1, Example $1^\circ$]{PV94}
 and Corollary of Proposition\;\ref{Spring} below.
 \begin{example}\label{ex2} Let ${\rm char}\,k=p>0$ and let  $G=B=k^+$.\;Let $x\in k[G]$ be the standard coordinate function on $G$.\;Every homomorphism of algebraic groups $f\colon G\to G$
 defines a regular action of $G$ on $X:=k^2$ by the formula
 \begin{equation}\label{action}
 u\cdot (a, b):=(a+ub+f(u), b),\quad \mbox{where $u\in G$, $(a,b)\in X$.}
\end{equation}
From \eqref{action} we infer that an element $u\in G$ lies in the $G$-stabilizer of a point $(a, b)\in X$ if and only if $u$ is a root of the polynomial $f+bx$.\;By \cite[Lemma\;3.3.5]{Sp98}
there are nonzero elements $\alpha_1,\ldots,\alpha_s\in k$ and an increasing sequence of nonnegative integers
$n_1,\ldots, n_s$ such that
\begin{equation}\label{f}
f=\alpha_1x^{p^{n_1}}\!+\cdots+\alpha_sx^{p^{n_s}};
\end{equation}
any $\alpha_i$ and $n_j$ may occur in the right-hand side \eqref{f} for an appropriate $f$.\;One of the roots of $f$ is $0$.\;Now take
$f$ with $n_s\geqslant 1$.\;By \eqref{f} the polynomial $(f+bx)/x$ has degree $p^{n_s}-1\geqslant 1$ and does not vanish at $0$ if $b\neq 0$ and $b\neq -\alpha_1$.\;Whence if these inequalities hold, the $G$-stabilizer of $(a, b)$ is nontrivial.\;Therefore, the action is not locally free.
\end{example}

Moreover, if even the action of  $C_d$ on $X$ is locally free, and hence $\gamma$ is bijective over
a dense open subset of $X$, it may happen that  $\gamma$ is purely inseparable, but not a birational isomorphism.\;The corresponding example is similar to Example \ref{ex1}.

\begin{example}\label{ex3}  Let ${\rm char}\,k=p>0$.\;Consider the locally free action of $G=B=k^+$
 on $X=k$, given by $b\cdot x:=b^p+x$.\;Then $n=1$, $d=0$, $C_0=G$, $X_0$
 is a point, $C_0\times X_0$ is naturally identified with $G$, and if $\sigma$ maps $X_0$ to $0$, then $\gamma^*(k(X))=k(t^p)\varsubsetneq k(t)=k(G)$, where $t$ is the standard coordinate function on $G$.\;Thus
 $\gamma$ is not a birational isomorphism.
\end{example}

 \vskip 1mm

4. Below we shall give a characteristic free proof of Theorem \ref{main}.\;For this, we need the following characterization of connected linear algebraic groups $G$ with the property that every rational action of $G$ on an irreducible algebraic variety is birationally equivalent to a regular action of $G$ on an affine algebraic variety.

\begin{definonumber} We say that a linear algebraic group $G$ {\it has property} {\rm(A)} if for {\it every} rational action of $\,G$ on an irreducible algebraic variety $X$,
there
exist an irreducible {\it affine} algebraic variety $Y$ and a birational isomorphism
\begin{equation}\label{phi}
\varphi\colon X\dashrightarrow Y
\end{equation}
such that the rational action of $\,G$ on $Y$ induced by $\varphi$ is {\it regular}.
\end{definonumber}

\begin{theorem}\label{solvable}
Let $G$ be a linear algebraic group
and let $\,G^0$ be
the connected component of the identity in $G$.
\begin{enumerate}[\hskip 2.2mm\rm(i)]
\item If $\,G^0$ is solvable, then $G$ has property ${\rm (A)}$.
\item If $\,G$ is connected and has property ${\rm (A)}$, then $G$ is solvable.
\end{enumerate}
\end{theorem}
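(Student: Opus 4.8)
The plan for part~(i) is to regularize the given rational action and then exhibit an affine model.  By the regularization theorem of Rosenlicht \cite{Ro56} and Sumihiro's theorem (the latter applied to the identity component $G^{0}$), after normalizing $X$ and replacing it by the intersection of the finitely many $G$-translates of a suitable $G^{0}$-stable dense open subset --- an intersection which is $G$-stable because $G^{0}$ is normal in $G$, and which remains dense and quasi-projective --- we may assume that $X$ is normal and quasi-projective, that the action of $G$ on $X$ is regular, and that there is a $G^{0}$-equivariant locally closed immersion $X\hookrightarrow{\bf P}(V)$ for some finite-dimensional rational $G^{0}$-module $V$.  Since $G^{0}$ is connected solvable, the Lie--Kolchin theorem \cite{Sp98} supplies a complete flag $0=V_{0}\subsetneq V_{1}\subsetneq\cdots\subsetneq V_{n}=V$ of $G^{0}$-submodules.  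Letting $j$ be least with $X\subseteq{\bf P}(V_{j})$, the subset $X^{\circ}:=X\cap\bigl({\bf P}(V_{j})\setminus{\bf P}(V_{j-1})\bigr)$ is $G^{0}$-stable and dense open in $X$, and, lying in the affine chart ${\bf P}(V_{j})\setminus{\bf P}(V_{j-1})\cong{\bf A}^{j-1}$, it is quasi-affine.

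Now bring back the full group.  With $g_{1},\dots,g_{r}$ a set of representatives of $G/G^{0}$, the set $W:=\bigcap_{i=1}^{r}g_{i}X^{\circ}$ is dense open in $X$, is $G$-stable (since $X^{\circ}$ is $G^{0}$-stable and $G^{0}$ is normal), is quasi-affine (a finite intersection of quasi-affine open subsets of a scheme is quasi-affine), and carries a regular $G$-action.  As $G$ is affine, $k[W]$ is a locally finite rational $G$-module; choose $f_{1},\dots,f_{N}\in k[W]$ defining a locally closed immersion $W\hookrightarrow{\bf A}^{N}$ and a finite-dimensional $G$-submodule $M\subseteq k[W]$ containing $f_{1},\dots,f_{N}$.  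The evaluation morphism $W\to M^{*}$ is then a $G$-equivariant locally closed immersion of $W$ into the $G$-module $M^{*}$, so the closure $Y$ of its image in the affine space $M^{*}$ is a $G$-stable closed subvariety.  Thus $Y$ is affine, the induced $G$-action on it is regular, and $Y$ contains an isomorphic copy of $W$ as a dense open subset; hence $Y$ is birationally isomorphic to $X$ compatibly with the $G$-actions, proving~(i).

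For part~(ii) I argue by contraposition.  Assume $G$ is connected but not solvable, so that $B\subsetneq G$ and the flag variety $X:=G/B$ has positive dimension; I claim that its natural (transitive, regular, hence rational) $G$-action has no affine model.  Suppose instead that $\varphi\colon X\dashrightarrow Y$ is a birational isomorphism onto an affine variety $Y$ carrying a regular $G$-action with respect to which $\varphi$ is equivariant.  Since $G$ acts transitively on $X$ one has $k(Y)^{G}\cong k(X)^{G}=k$, so the rational quotient of $Y$ by $G$ is a point and a generic $G$-orbit has dimension $\dim Y$, hence is dense open in $Y$ \cite{PV94}; pick $y\in Y$ with $Gy$ dense open in $Y$ and set $H:=G_{y}$, so that $Gy\cong G/H$ is quasi-affine (being open in the affine variety $Y$).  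Restricting $\varphi^{-1}$ to $Gy$ gives a $G$-equivariant birational map $G/H\dashrightarrow G/B$; because the only nonempty $G$-stable open subset of the single orbit $G/H$ is $G/H$ itself, this map is a morphism, and, being dominant and $G$-equivariant, it is --- after replacing $B$ by a conjugate so that $H\subseteq B$ --- the canonical projection $G/H\to G/B$, whose fibres are all isomorphic to $B/H$; birationality forces $B/H$ to be a single point, so $H=B$ up to conjugacy.  Then the dense open subset $Gy\cong G/H\cong G/B$ of the irreducible affine variety $Y$ is complete, hence closed in $Y$, hence all of $Y$; so $Y$ is at once affine and complete, thus finite, contradicting $\dim Y=\dim G/B>0$.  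This proves~(ii).

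The part I expect to require the most care is the affinization carried out in~(i).  Rosenlicht's and Sumihiro's theorems are naturally stated for the connected group $G^{0}$, and the two places where the possibly disconnected $G$ must be reinstated --- passing from a $G^{0}$-stable to a $G$-stable open subset by intersecting $G$-translates, and promoting the resulting quasi-affine $G$-variety $W$ to an affine one by re-embedding it through a finite-dimensional $G$-submodule of $k[W]$ --- are exactly where one has to check that $G$-stability, density, quasi-projectivity/quasi-affineness, and regularity of the action are all preserved simultaneously.  Part~(ii), by contrast, is comparatively soft once one observes that a $G$-equivariant birational map issuing from a homogeneous space is automatically an isomorphism onto its image.
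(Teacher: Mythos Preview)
Your proof is correct. Part~(i) follows the paper's strategy essentially verbatim: regularize (Rosenlicht), normalize, linearize for $G^{0}$ (Sumihiro), find a $G^{0}$-stable affine chart using solvability (you say Lie--Kolchin, the paper says Borel fixed-point --- equivalent here), intersect finitely many $G$-translates to restore $G$-stability (you use coset representatives of $G/G^{0}$, the paper quotes Borel--Serre for a finite subgroup meeting every component --- either device works), and finally embed the resulting quasi-affine $G$-variety into an affine one.

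For part~(ii) your argument is correct but takes a longer route than the paper's. The paper simply observes that the $G$-equivariant rational map $\varphi\colon G/P\dashrightarrow Y$ must be a morphism, since its domain of definition is a nonempty $G$-stable open subset of the homogeneous space $G/P$ and hence all of $G/P$; then $\varphi(G/P)$ is a complete closed irreducible subvariety of the affine variety $Y$, hence a point, contradicting $\dim G/P>0$. You invoke exactly the same ``equivariant rational map out of a homogeneous space is everywhere defined'' principle, but you apply it to $\varphi^{-1}$ restricted to a dense orbit $Gy\subseteq Y$ (whose existence you first extract from $k(Y)^{G}=k$ via Rosenlicht's generic-quotient theorem), and then analyze the resulting morphism $G/G_{y}\to G/B$ to force $G_{y}=B$ up to conjugacy. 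This works, but the detour through the generic orbit and the fibration $G/H\to G/B$ is avoidable: applying the homogeneity argument directly to $\varphi$ rather than to $\varphi^{-1}$ finishes in two lines.
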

\begin{proof}
(i)$\Rightarrow$(ii):
 Let $G^0$ be solvable.\;Consider a rational action of $G$ on an irreducible algebraic variety $X$.\;By
 \cite[Thm.\;1]{Ro56}, there exists
an irredu\-cible algebraic variety $X_1$ and a birational isomorphism
\begin{equation*}
\alpha_1\colon X \dashrightarrow X_1
\end{equation*}
such that the rational action of $G$ on $X_1$ induced by $\alpha_1$ is regular.

Let $\nu\colon X_2\to X_1$ be the normalization of $X_1$.\;Then the rational action of $G$ on $X_2$ induced by the birational isomorphism
\begin{equation*}
\alpha_2:=\nu^{-1}\colon X_1 \dashrightarrow X_2
\end{equation*}
is regular, see  \cite[Thm.\;2.25]{It82}.

By \cite[Lemma\;8]{Su74}, since $X_2$ is a normal
algebraic variety, it contains a nonempty $G^{0}$-stable quasi-projective open subset $U$.\;Hence,
by \cite[Thm.\;1]{Su74}, for some positive integer $n$,
there exist a regular action of $G^0$ on the projective space ${\bf P}^n$
and a $G^0$-equivariant embedding  of the algebraic variety $U$ into ${\bf P}^n$ ,
\begin{equation*}
\iota\colon U\hookrightarrow {\bf P}^n.
\end{equation*}
We may (and shall) assume that $n$ is minimal possible with this property.
Since ${\rm Aut}\,{\bf P}^n={\rm PGL}_n$, this action of $G^0$ on ${\bf P}^n$ induces an action of
$G^0$ on the dual projective space $\check{\bf P}^n$.\;By the Borel fixed-point theorem \cite[Thm.\;6.2.6]{Sp98},
the assumption that $G^0$ is a connected solvable linear algebraic group implies
 that in $\check{\bf P}^n$ there is a fixed point of this action.\;This means that ${\bf P}^n$ contains a $G^0$-stable hyperplane $H$.\;Hence ${\bf P}^n\setminus H$ is a
$G^0$-stable affine open subset of ${\bf P}^n$.\;Therefore, the minimality assumption on $n$ implies
that
$\iota(U)\cap ({\bf P}^n\setminus H)$ is
a nonempty  $G^0$-stable   open quasiaffine  subset of $\iota(U)$.\;This proves that $X_2$ contains a nonempty
 $G^0$-stable  open quasiaffine subset $V$.\;Normality of $G^0$ in $G$ then implies that
 $g\cdot V$  for every element $g\in G$ is a nonempty
 $G^0$-stable  open quasiaffine subset of $X_2$.

 By
 \cite[Lemma 5.11 and the footnote to its proof]{BS64},
there exists a finite subgroup $F$ of $G$ that intersects every connected component of $G$.\;Put
 \begin{equation*}
 X_3:=\textstyle\bigcap_{g\in F} g\cdot V.
 \end{equation*}
 Then $X_3$ is $G^0$- and $F$-stable and, therefore, $G$-stable.\;Let
 \begin{equation*}
 \alpha_3\colon X_2\dashrightarrow X_3
 \end{equation*}
 be the birational isomorphism inverse to the identity embedding $X_3 \hookrightarrow X_2$.\;Thus we have proved that $X_3$ is a quiasiaffine algebraic variety such that the rational action of $G$ on $X_3$ induced by $\alpha_3$
 is regular.

 Finally, by \cite[Lemma\;2]{Ro61} (see also \cite[Thm.\;1.6]{PV94}) quasiaffiness of $X_3$ implies  that there exist an irreducible affine algebraic variety $X_4$ endowed with a regular action of $G$ and a $G$-equivariant birational embedding
 \begin{equation*}
 \alpha_4\colon X_3\hookrightarrow X_4.
 \end{equation*}

  This shows that we may take $Y:=X_4$ and $\varphi:=\alpha_4\circ\alpha_3\circ\alpha_2\circ \alpha_1$.

  \vskip 1mm

  (ii)$\Rightarrow$(i): Let the group $G$ be connected and has property (A).\;Assume that it is non-solvable.\;Then it contains a proper parabolic subgroup $P$; see \cite[Prop.\;6.2.5]{Sp98}.\;Let $X$ be $G/P$ endowed with the natural action of $G$.\;We have $\dim X>0$.\;Let
  $Y$ and $\varphi$ be respectively an irreducible affine algebraic variety endowed with a regular action of $G$ and a birational isomorphism \eqref{phi}, whose existence is ensured by property (A).\;Since $\varphi$ is $G$-equivariant and the action of $G$ on $X$ is transitive, $\varphi$ is a morphism.\;Therefore,
completeness and irreducibility of $X$ implies that $\varphi(X)$ is a complete $G$-stable closed irreducible subset in $Y$; see\;\cite[Prop.\;6.1.2(iii)]{Sp98}.\;Since $Y$ is affine, this yields that $\varphi(X)$ is a point; see\;\cite[Prop.\;6.1.2(vi)]{Sp98}.\;But $\dim\,\varphi(X)=\dim\,X>0$ because $\varphi$ is a birational isomorphism\,---\,a contradiction.
\end{proof}

5. We also need the following

\begin{proposition}
 \label{Spring}
 Let $G=k^+$
 and let $X$ be an irreducible
 affine algebraic variety endowed with a nontrivial regular action of $G$.\;Then there exists an irreducible affine variety $Y$ with the following properties:
 \begin{enumerate}[\hskip 2.2mm\rm(a)]
 \item there is an isomorphism $\phi$ of $G\times Y$ onto an open subvariety of $X$;
 \item there is an morphism $\psi\colon G\times Y\to G$ such that for all $a, b\in k$, $y\in Y$,
 \begin{equation*}
\psi(a+b, y)=\psi(a, y)+\psi(b, y),\quad a\cdot\phi(b, y)=\phi(\psi(a, y)+b, y).
 \end{equation*}
 \end{enumerate}
 \end{proposition}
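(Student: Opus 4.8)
The plan is to reduce everything to the structure of $k^+$-actions on affine varieties via slices. First I would pass to the rational quotient $X \dashrightarrow X/\!/G$ and observe that since $G = k^+$ is unipotent, the action is "co-free" on a dense open set: there is no nontrivial finite subgroup of $k^+$, but the subtlety (as Example~\ref{ex2} shows) is that the action need not be locally free in positive characteristic. So I would \emph{not} try to split off $G$ directly. Instead, the key is to find a rational $G$-invariant function $t$ together with a rational function $u$ on which $G$ acts by translation, i.e., an "additive slice coordinate." Concretely, pick any nonconstant $f \in k[X]$ that is \emph{not} $G$-invariant; then $a \mapsto a\cdot f - f$ is a morphism $G \times X \to k[X]$, and for generic $x \in X$ the map $a \mapsto (a\cdot f)(x)$ is a nonconstant polynomial in $a$ whose "linear part" gives a cocycle. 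More robustly, I would use the Lie-algebra/derivation picture: the action corresponds to a locally nilpotent derivation $D$ on $k[X]$ (in char.\ $0$) or to an iterative higher derivation / $\mathbb{G}_a$-action (in char.\ $p$), and I would produce an element $r$ in the quotient field with $D r = 1$ (a "local slice"), which exists because the action is nontrivial and hence $D \neq 0$.

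Granting such an $r \in k(X)$ with the $G$-action sending $a : r \mapsto r + c(a)$ for some additive function $c$ on $G$ — and here I must be careful: $c$ need not be the identity (this is exactly the purely-inseparable phenomenon of Example~\ref{ex3}), rather $c$ is an additive polynomial $\psi(a,\cdot)$ whose coefficients may themselves be rational functions on $X$ — I would then set $Y$ to be (an affine model of) the subvariety cut out by the appropriate invariants, and take $\psi$ to be precisely this additive polynomial, recording it as the morphism $G \times Y \to G$ required in (b). The equivariant map $\phi \colon G \times Y \to X$ is then $(b,y) \mapsto$ "flow $y$ by the group element $b$ along the slice," and the two displayed identities in (b) are just the cocycle condition for $\psi$ and the compatibility of $\phi$ with the action, both of which follow formally once $\psi$ is defined as the translation action on the slice coordinate.

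For part (a) — that $\phi$ is an isomorphism onto an \emph{open} subvariety of $X$ — I would argue as follows: $\phi$ is $G$-equivariant with $G$ acting freely on the source by translation in the first factor, and by construction it is birational (the slice coordinate $r$ together with the invariants generates $k(X)$), so $\phi$ is an open immersion on a dense $G$-stable open set; shrinking $Y$ to the corresponding open subvariety and passing to an affine open subset of it (possible since $Y$ is irreducible and we only need \emph{some} affine $Y$) finishes it. The point that makes this work even in characteristic $p$, and which is the main obstacle, is showing that $k(X)$ is generated over $k(X)^G$ by a single transcendental element on which $G$ acts by an additive polynomial — equivalently, that the generic fiber of the rational quotient is a form of $\mathbf{A}^1$ with its translation structure twisted by an additive polynomial. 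I expect to get this from \cite[Lemma~3.3.5]{Sp98} (the classification of additive polynomials, already invoked in Example~\ref{ex2}) applied to the induced action on a suitable function field, combined with the Rosenlicht regularization \cite[Thm.\;1]{Ro56} to realize the construction geometrically; the Corollary to this Proposition referenced in the text is presumably the extraction of the additive polynomial $\psi$ from $D$ or from the higher-derivation data.
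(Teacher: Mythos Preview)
The paper does not prove this proposition at all: its entire proof is the single sentence ``See \cite[Prop.\;14.2.2]{Sp98} and an earlier result \cite[Lemma 1.5]{Mi78}.'' The result is imported wholesale from Springer's book, so there is no argument in the paper to compare yours against.

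Your sketch is in the spirit of Springer's actual proof --- one does locate a function on which $G$ acts via an additive polynomial whose coefficients are invariants, and builds the slice from it --- but it has real gaps. The ``local slice with $Dr=1$'' device is a characteristic-zero statement; in characteristic $p$ there is no locally nilpotent derivation and in general no such $r$, and it is exactly this failure that forces the $y$-dependent additive polynomial $\psi(a,y)$. You flag the difficulty but then write ``granting such an $r$'' and immediately allow the translation to be by a general additive polynomial, which is inconsistent with $Dr=1$. More seriously, your argument for (a) --- that $\phi$ is an isomorphism onto an \emph{open} subvariety --- does not go through: ``birational and $G$-equivariant with $G$ acting freely on the source'' does not yield an open immersion in positive characteristic (purely inseparable bijections are precisely the obstruction the paper's Examples~\ref{ex1} and~\ref{ex3} are there to illustrate), and shrinking $Y$ cannot repair inseparability. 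Springer's proof handles both issues simultaneously by filtering $k[X]$ by degree along the orbit and reading off $\psi$ and the open set directly from that filtration; invoking Rosenlicht regularization and \cite[Lemma~3.3.5]{Sp98} after the fact does not substitute for this. (Also, the Corollary you mention is a \emph{consequence} of the Proposition, stated immediately after it in the paper, not an input to its proof.)
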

 \begin{proof} See \cite[Prop.\;14.2.2]{Sp98} and an earlier result \cite[Lemma 1.5]{Mi78}.
 \end{proof}

\begin{cornonumber} Maintain the notation of Proposition {\rm \ref{Spring}}.\;Then the formula
\begin{equation*}
a\cdot (b, y):=(\psi(a, y)+b, y),\quad a, b\in G,\; y\in Y
\end{equation*}
defines a regular action of $\,G$ on $G\times Y$ such that
\begin{enumerate}[\hskip 2.2mm ---]
\item the natural projection
${\rm pr}_2\colon G\times Y\to Y$ is
its rational quotient;
\item  the isomorphism $\phi$
is $G$-equivariant.
\end{enumerate}
In particular, $Y$ and
$X\dss G$ are birationally isomorphic.
\end{cornonumber}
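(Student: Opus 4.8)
The plan is to read everything off Proposition~\ref{Spring}.\;Write $t$ for the standard coordinate function on $G=k^+$, so that $k[G\times Y]=k[Y][t]$ and $k(G\times Y)=k(Y)(t)$.\;First I would verify that the formula defines a regular action: the map $(a,(b,y))\mapsto(\psi(a,y)+b,y)$ is manifestly a morphism, additivity of $\psi$ in its first variable gives $\psi(0,y)=0$ so that $0$ acts trivially, and $a\cdot\bigl(a'\cdot(b,y)\bigr)=(\psi(a,y)+\psi(a',y)+b,y)=(\psi(a+a',y)+b,y)=(a+a')\cdot(b,y)$, again by additivity.\;Equivariance of $\phi$ is then immediate from the second identity of Proposition~\ref{Spring}, namely $\phi\bigl(a\cdot(b,y)\bigr)=\phi(\psi(a,y)+b,y)=a\cdot\phi(b,y)$.\;Since the second coordinate is never moved, ${\rm pr}_2$ is $G$-invariant, whence ${\rm pr}_2^*k(Y)\subseteq k(G\times Y)^G$; the content of the statement is the reverse inclusion.

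To obtain it, the plan is to show that the generic fibre of ${\rm pr}_2$ is a single $G$-orbit.\;I would first note that $\psi$ is not the zero morphism: otherwise the action on $G\times Y$, and hence --- by equivariance of $\phi$ and openness of its image --- the action on $X$ would be trivial, contrary to hypothesis.\;For fixed $y$ the map $a\mapsto\psi(a,y)$ is a homomorphism $k^+\to k^+$, so by \cite[Lemma\;3.3.5]{Sp98} it is given by an additive polynomial in $t$ whose coefficients lie in $k[Y]$; since these coefficients do not all vanish, there is a dense open $Y_0\subseteq Y$ on which some coefficient is nonzero, and for $y\in Y_0$ the polynomial $\psi(\cdot,y)$ is nonconstant (being a nonzero polynomial that vanishes at $0$), hence surjective as a map $k\to k$ because $k$ is algebraically closed.\;It is here --- and essentially only here --- that algebraic closedness enters: in positive characteristic $\psi(\cdot,y)$ need not be linear and may even be inseparable, e.g.\ $t\mapsto t^{p}$.\;Consequently, for $y\in Y_0$ the $G$-orbit of any point $(b,y)$ is the whole fibre $G\times\{y\}$.

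Now take $h\in k(G\times Y)^G$.\;Unravelling the definitions, $G$-invariance of $h$ amounts to $h(t+\gamma)=h(t)$ in $k(Y)(t)$ for every element $\gamma$ of the subgroup $\Gamma:=\{\psi(a,\cdot):a\in k\}$ of $(k(Y),+)$; and $\Gamma$ is infinite, since the kernel of $a\mapsto\psi(a,\cdot)$ is the finite set of roots in $k$ of a nonzero additive polynomial.\;If $h$ were not in $k(Y)$, then $t$ would be algebraic over $k(Y)(h)$, so $k(Y)(t)/k(Y)(h)$ would be a finite extension and would thus admit only finitely many automorphisms; but $\gamma\mapsto\bigl(t\mapsto t+\gamma\bigr)$ embeds the infinite group $\Gamma$ into the automorphism group of this extension --- a contradiction.\;Hence $h\in k(Y)$, so $k(G\times Y)^G={\rm pr}_2^*k(Y)$ and ${\rm pr}_2$ is a rational quotient.\;Finally, since $\phi$ is a $G$-equivariant isomorphism of $G\times Y$ onto an open subvariety of $X$, the composite ${\rm pr}_2\circ\phi^{-1}\colon X\dashrightarrow Y$ is a $G$-invariant dominant rational map with $({\rm pr}_2\circ\phi^{-1})^*k(Y)=k(G\times Y)^G=k(X)^G$, so it is a rational quotient of the $G$-action on $X$; therefore $Y$ and $X\dss G$ are birationally isomorphic.\;The single step that is not pure formalism is the generic transitivity of the action along ${\rm pr}_2$ --- equivalently, the surjectivity of the possibly inseparable homomorphism $\psi(\cdot,y)$ for generic $y$ --- and that is the point I expect to need a little care.
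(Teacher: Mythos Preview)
Your argument is correct. The paper gives no proof of this corollary, regarding it as an immediate consequence of Proposition~\ref{Spring}; what you have written fills in the suppressed details soundly. The verifications that the formula defines a regular action and that $\phi$ is $G$-equivariant are, as you observe, formal consequences of the two identities in Proposition~\ref{Spring}, and you have correctly located the only substantive step in the equality $k(G\times Y)^G={\rm pr}_2^*k(Y)$. Your proof of this---viewing $\psi$ as a nonzero additive polynomial in $t$ over $k(Y)$, so that its kernel in $k$ is finite and the translation group $\Gamma\subseteq(k(Y),+)$ is infinite, and then playing this off against the finiteness of ${\rm Aut}\bigl(k(Y)(t)/k(Y)(h)\bigr)$ for any $h\in k(Y)(t)\setminus k(Y)$---is clean and works uniformly in all characteristics.
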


6. We now turn to a characteristic free proof of Theorem \ref{main}.

\begin{proof}[Characteristic free proof of Theorem {\rm \ref{main}}]
We retain the argument in \cite{Ma63}, except its part referring to the cross-section theorem of \cite{Ro56}
that works, as we have explained, only if ${\rm char}\,k=0$.\;This part is replaced by the following
characteristic free argument.

By Theorem \ref{solvable}, we may assume that $X$ is affine and the action of the one-dimensional connected linear algebraic group $H:=B_d/B_{d+1}$ on $X$ is nontrivial and regular.\;There are two possibilities:\;$H$ is isomorphic to either $k^+$
or $k^\times$.

Let $H$ be isomorphic to
$k^+$.\;Then by  Corollary of Proposition \ref{Spring} the variety
 $X$ is $H$-equivariantly birationally isomorphic to the variety ${\bf P}^1\times X_d$, on which $H$ acts rationally via the first factor so that the second projection ${\rm pr}_2\colon {\bf P}^1\times X_d\to X_d$ is a rational quotient of this action.

 It remains to show that the same is true if $H$ is isomorphic to $k^\times$.\;The group
 ${\mathcal X}(H)$
 of characters of $H$ (i.e., algebraic homomorphisms $H\to k^\times$)
 is isomorphic to $\bf Z$.\;Let $\chi$ be its generator.\;For every $s\in \mathbf Z$, put
 \begin{equation}\label{chara}
 k(X)_s:=\{f\in k(X)\mid h\cdot f=\chi^s(h)f\;\;\mbox{for every $h\in H$}\};
 \end{equation}
in particular, $k(X)_0=k(X)^H$.\;Since $H$ is a torus,  the $H$-module $k[X]$ is semisimple and
its isotypic decomposition has the form
\begin{equation}\label{iso}
 k[X]=\textstyle \bigoplus_{s\in \mathbf Z} k[X]_s,\quad \mbox{where $k[X]_s:=k(X)_s\cap k[X]$}
\end{equation}
(see, e.g.,\;\cite[3.2.13]{Sp98}); in particular, $k[X]_0=k[X]^H$.\;Given \eqref{iso}, every element
$f\in  k[X]$ can be uniquely written as the following sum of the nonzero summands:
\begin{equation}\label{dec}
f=f_{i_1}+\cdots+f_{i_s},\;\;\mbox{where $f_j\in k[X]_j$ for all $j$.}
\end{equation}
We call \eqref{dec} the canonical decomposition of $f$.\;Nontriviality of the action of $H$ on $X$ implies that $k[X]^H\neq k[X]$.\;Therefore, the subgroup
\begin{equation}\label{gamma}
\Gamma:=\{s\in \mathbf Z\mid k(X)_s\neq 0\}
 \end{equation}
 of $\mathbf Z$ is nonzero, i.e.,\;$\Gamma=n\mathbf Z$ for some positive integer $n$.

It follows from \eqref{gamma} that there are $a_1,\ldots, a_m\in \mathbf Z$ such that
 in canonical decomposition \eqref{dec} we have
 \begin{equation}\label{divi}
 i_1=na_1,\ldots, i_s=na_s.
 \end{equation}

 Fix a nonzero element $t\in k(X)_n$.\;From \eqref{chara},
  \eqref{dec}, and \eqref{divi}
  we deduce that
  \begin{equation}\label{inv}
  f_{i_r}/t^{a_{r}}\in k(X)^G\quad\mbox{for every $r$}.
  \end{equation}
 In turn, \eqref{dec} and \eqref{inv} yield that $f=\sum_{r=1}^st^{a_r}(f_{i_r}/t^{a_r})$ is an element of the subfield $k(X)^H\!(t)$ of $X$.\;Hence $k[X]$ lies in this subfield.\;But $k(X)$ is the field of fractions of $k[X]$ because $X$ is affine.\;This proves that
 \begin{equation}\label{t}
 k(X)=k(X)^H\!(t).
 \end{equation}

 The element $t$ is transcendental over $k(X)^H$.\;Indeed, if not, there is a relation $\sum_{i=0}^m a_it^{r_i}=0$ for some integers $0\leqslant r_0<r_1<\cdots<r_m$ and nonzero elements
 $a_i\in k(X)^H$.\;From \eqref{chara} we then deduce that
 \begin{equation*}\label{Art}
 \textstyle\sum_{i=0}^m \chi^{nr_i}(h) a_it^{r_i}=0\quad\mbox{for every element $h\in H$.}
 \end{equation*}
This contradicts Artin's theorem on independence of characters, because $k^\times$ is the subgroup   of the multiplicative group $k(X)^\times$ of $k(X)$, and therefore, we may consider the elements of $\mathcal X(H)$ as the homomorphisms $H\to k(X)^\times$.

Given that $t$ is transcendental over $k(X)^H$, we conclude from \eqref{t} that $X$ is $H$-equivariantly birationally isomorphic to the variety ${\mathbf P}^1\times X\dss H$, on which $H$ acts rationally via the first factor so that the second projection ${\rm pr}_2\colon {\bf P}^1\times X\dss H\to X\dss H$ is a rational quotient of this action.\;This completes the proof.
\end{proof}

7. Combining the given proof of Theorem \ref{main} with Rosenlicht's theorem \cite[Thm.]{Ro63} on the existence of generic geometric quotient, we obtain the following ge\-ne\-ralization of the result of \cite[Sect.\,1]{GP93} about ``trivial quotient'' of a unipotent group action on quasiaffine variety defined over a field of characteristic $0$.
\begin{theorem} Let $X$ be an irreducible algebraic variety endowed with a re\-gu\-lar action of a solvable connected linear algebraic group $G$.\;Then for the restriction of this action
on a certain $G$-stable dense open subset $\,U$ of $X$ there exist
\begin{enumerate}[\hskip 2.2mm ---]
\item the geometric quotient $\pi^{\ }_{G, U}\colon U\to U/G$;
\item an isomorphism  $\varphi\colon U\to {\bf A}^{\hskip -.5mm r, s}\times (U/G)$, where
 $${\bf A}^{\hskip -.5mm r, s}:=\{(\alpha_1,\ldots, \alpha_{r+s})\in {\bf A}^{\hskip -.5mm r+s}\mid \alpha_i\neq 0 \;\mbox{for every $i\leqslant r$}\},\;\;r\geqslant 0,\;s\geqslant 0,$$
\end{enumerate}
such that the natural projection  ${\bf A}^{\hskip -.5mm r, s}\times (U/G)\to U/G$ is the geometric quotient of the regular action of $\;G$ on ${\bf A}^{\hskip -.5mm r, s}\times (U/G)$ induced by $\varphi$.
\end{theorem}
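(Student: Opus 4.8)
The plan is to argue by induction on $\dim G$.\;If $\dim G=0$, or if the action is trivial, one takes $U=X$, $\,U/G=X$, $\,r=s=0$.\;So assume $\dim G\geqslant 1$ and the action nontrivial.\;By Theorem \ref{solvable}(i) the group $G$ has property {\rm (A)}, hence $X$ is $G$-equivariantly birationally isomorphic to an affine variety; since every open subset produced below will be the preimage of an open subset of the quotient --- and is therefore automatically $G$-stable and unaffected by replacing $X$ with a $G$-stable dense open affine model --- we assume from now on that $X$ is affine.\;Choose a chain $G=G_0\supset G_1\supset\cdots\supset G_n=\{e\}$ of connected normal subgroups with $\dim G_i=\dim G-i$, let $d$ be the largest index with $G_d$ acting nontrivially (so $0\leqslant d\leqslant n-1$), put $N:=G_d$ and $H:=N/G_{d+1}$.\;The nontrivial regular action of $N$ on $X$ then factors through the one-dimensional connected group $H$, which is isomorphic to $F:=k^\times$ or to $F:=k^+$.

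The first real step is to pass to a geometric quotient by $N$ \emph{equivariantly}.\;Applying Rosenlicht's theorem \cite{Ro63} to the $N$-action, and carrying it out compatibly with $G$ (possible because $N$ is normal in $G$, so $G$ permutes the $N$-orbits), one obtains a $G$-stable dense open $X^\flat\subseteq X$ together with a $G$-equivariant geometric quotient $q\colon X^\flat\to V:=X^\flat/N$, the group $G/N$ acting on $V$; after harmless $G$-equivariant shrinkings we may assume $V$ affine and $q$ an affine morphism, so that $X^\flat$ is again affine.\;Since $\dim(G/N)=d<\dim G$, the inductive hypothesis applies to the action of $G/N$ on $V$ and gives a $(G/N)$-stable dense open $V''\subseteq V$, a geometric quotient $\bar\pi\colon V''\to Q:=V''/(G/N)$, and an isomorphism $\bar\varphi\colon V''\to{\bf A}^{r'',s''}\times Q$ with ${\rm pr}_2\circ\bar\varphi=\bar\pi$.\;Put $X'':=q^{-1}(V'')$, a $G$-stable dense open subset; then $q|_{X''}$ is a geometric quotient by $N$, and, $N$ being normal in $G$, the composition $\pi:=\bar\pi\circ q|_{X''}\colon X''\to Q$ is a geometric quotient of the $G$-action.\;Since $\pi$ is $G$-invariant, the $\pi$-preimage of \emph{any} subset of $Q$ is $G$-stable; hence it is enough to find a dense open $Q'\subseteq Q$ over which $\pi$ becomes, up to isomorphism, the projection ${\bf A}^{r,s}\times Q'\to Q'$, and then to set $U:=\pi^{-1}(Q')$.

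The remaining step is to analyse $q|_{X''}\colon X''\to V''$, whose fibres are the $N$-orbits.\;If $H\cong k^\times$, the kernel of the action is a finite subgroup $K\subset k^\times$, the faithful action of $H/K\cong k^\times$ is locally free by \cite[Lemma\;2.4]{Po13}, and after a $G$-equivariant shrinking it may be assumed free, so that $q|_{X''}$ is a ${\bf G}_m$-torsor over $V''$, classified by a line bundle on $V''$.\;If $H\cong k^+$, then Proposition \ref{Spring} and its Corollary --- applied after deleting the $G$-stable $N$-fixed-point locus --- show that, after a $G$-equivariant shrinking, $q|_{X''}$ is a Zariski-locally trivial ${\bf A}^1$-bundle over $V''$, to which a line bundle on $V''$ is likewise associated (compare \cite{Mi78}).\;Every such shrinking replaces $V''$ by a $(G/N)$-stable dense open subset which, as $\bar\pi$ is a geometric quotient, is again ${\bf A}^{r'',s''}$ times a dense open of $Q$; so we keep writing $V''={\bf A}^{r'',s''}\times Q$, $\,\pi=\bar\pi\circ q$.\;Restricting $Q$ to a normal dense open one has ${\rm Pic}({\bf A}^{r'',s''}\times Q)={\rm Pic}(Q)$, so the associated line bundle is pulled back from $Q$; choosing a dense affine open $Q'\subseteq Q$ trivializing it, the bundle $q$ becomes trivial over ${\bf A}^{r'',s''}\times Q'={\rm pr}_2^{-1}(Q')$ --- in the $k^+$ case also because, that variety being affine, $H^1$ of its structure sheaf vanishes.\;Hence $U:=\pi^{-1}(Q')=q^{-1}({\rm pr}_2^{-1}(Q'))\cong F\times{\bf A}^{r'',s''}\times Q'={\bf A}^{r,s}\times Q'$, with $(r,s)=(r''+1,s'')$ if $F=k^\times$ and $(r,s)=(r'',s''+1)$ if $F=k^+$, and under this isomorphism ${\rm pr}_2$ corresponds to the geometric quotient $\pi|_U$; this closes the induction.

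The hard part will be the content of the last two paragraphs: upgrading the merely birational product decomposition furnished by the method of proof of Theorem \ref{main} to a biregular one on a $G$-stable open subset with the projection a genuine geometric quotient.\;What makes it go through is running the induction through geometric (not rational) quotients, so that the relevant open subsets are forced to be preimages of opens in the final base $Q$ --- hence $G$-stable for free --- and the remaining triviality question is reduced to the vanishing, over a suitable dense open of $Q$, of an obstruction class coming from $Q$, which is guaranteed by the identity ${\rm Pic}({\bf A}^{r'',s''}\times Q)={\rm Pic}(Q)$.
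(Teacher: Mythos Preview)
The paper gives no detailed proof here --- it simply says the statement follows by combining the characteristic-free proof of Theorem~\ref{main} with Rosenlicht's theorem on generic geometric quotients. Your induction along a normal chain in $G$, with Rosenlicht invoked $G$-equivariantly at each stage and the $k^\times$/$k^+$ dichotomy handling the one-dimensional piece, is exactly that strategy carried out in full. The torsor/Picard packaging you use to globalize the splitting (reduce to a class in ${\rm Pic}({\bf A}^{r'',s''}\times Q)={\rm Pic}(Q)$, then kill it by shrinking $Q$) is your own addition; the paper's implicit route would rather spread out the explicit transcendental generator $t$ produced in its proof of Theorem~\ref{main}. Your version is more conceptual and makes the $G$-stability of the final open set transparent, since every shrinking is a pullback from $Q$.

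Two points deserve tightening. In the $k^+$ case, Proposition~\ref{Spring} trivializes $q$ only over \emph{one} open $V'''\subset V''$, which is not a priori $(G/N)$-stable; to get Zariski-local triviality over a $(G/N)$-stable open you must use that $q$ is $G$-equivariant and $N\trianglelefteq G$, so each translate $g\cdot V'''$ also trivializes $q$, whence $q$ is locally trivial over the $(G/N)$-saturation $\bar\pi^{-1}(\bar\pi(V'''))$. This is the actual content behind your phrase ``$G$-equivariant shrinking'' and should be said. Second, the identity ${\rm Pic}({\bf A}^{r'',s''}\times Q)={\rm Pic}(Q)$ requires $Q$ smooth (or locally factorial), not merely normal; this is harmless since one may first shrink $Q$ to its smooth locus.
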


8. {\it Acknowledgement.} I thank G. Kemper for exchange of emails that initiated this research. 
He communicated to me his characteris\-tic-free proof of
the above-mentioned 
result of  \cite{GP93} 
and drew my attention to \cite[Prop.\;14.2.2]{Sp98}, \cite[Lemma 1.5]{Mi78}. Example 2 above is a generalization of the one I first learned from him.


\begin{thebibliography}{Ma\;63}

 \bibitem[BS\;64]{BS64} {\rm A.\;Borel,\;J-P.\;Serre}, {\it   Th\'eor\`emes de finitude en cohomogie galoisienne}, Comm. Math. Helv. {\bf 39} (1964), 111--164.

 \bibitem[GP\;93]{GP93} {\rm G.-M.\;Greuel, G.\;Pfister}, {\it Geometric quotients of unipotent group actions}, Proc. London Math. Soc. (3) {\bf 76} (1993), 75--105.


     \bibitem[Ii\;82]{It82} S.\;Iitaka, {\it Algebraic Geometry}, Graduate Texts in Mathematics,  Vol. 76,  Sprin\-ger-Verlag, New York, 1982.

 \bibitem[Ma\;63]{Ma63}
{\rm H.\;Matsumura}, {\it On algebraic groups of birational transformations}, Atti Accad.
Naz. Lincei Rend. Cl. Sci. Fis. Mat. Natur. (8) {\bf 34} (1963), 151--155.

\bibitem[Mi\;78]{Mi78} M.\;Miyanishi, {\it Lectures on Curves on Rational and Unirational Surfaces}, Tata Institute of Fundamental
Research Lectures on Mathematics and Physics, Vol. 60, Tata Institute of Fundamental Research, Bombay, Narosa
Publishing House, New Delhi 1978.

 \bibitem[Po\,13]{Po13} {\rm V.\;L.\;Popov}, {\it Some subgroups of the Cremona groups},\;in:\;{\it Affine Algebraic Ge\-o\-metry}, Proceedings of the conference on the occasion of M. Miyanishi's 70th birthday (Osaka, Japan, 3--6 March 2011), World Scientific, Singapore, 2013, pp.\;213--242.

\bibitem[PV\;94]{PV94}
{\rm V.\;L.\;Popov, E.\;B.\;Vinberg}, {\em Invariant theory}, in:\,{\em Algebraic
  Geometry} IV, Encyclopaedia of Mathematical Sciences, Vol.\,55,
  Springer-Verlag, Berlin, 1994, pp. 123--284.

\bibitem[Ro\;56]{Ro56} {\rm M.\;Rosenlicht}, {\it Some basic theorems on algebraic groups},
Amer. J. Math. {\bf 78} (1956), 401--443.

\bibitem[Ro\;61]{Ro61} {\rm M.\;Rosenlicht}, {\it On quotient varieties and the affine embedding of certain homogeneous spaces}, Trans. Amer. Math. Soc. {\bf 101} (1961), 211--223.

    \bibitem[Ro\;63]{Ro63} {\rm M.\;Rosenlicht}, {\it A remark on quotient spaces}, Anais Acad. Brasil. Ci\^enc. {\bf 35} (1963), 487--489.

\bibitem[Sp\;98]{Sp98}{\rm  T.\;A.\;Springer}, {\it Linear Algebraic Groups}, 2nd edition,
Progress in Mathematics, Vol. 9, Birkh\"auser, Boston, 1998.

\bibitem[Su\,74]{Su74} {\rm H.\;Sumihiro}, {\it Equivariant completion}, J. Math. Kyoto Univ.
{\bf 14} (1974), 1--28.

\end{thebibliography}
 \end{document}